\date{}
\newtheorem{definition}{Definition}
\theoremstyle{plain}
\theoremstyle{definition}
\theoremstyle{remark}
\newtheorem{theorem}{Theorem}
\newtheorem{example}{Example}
\title{Higher Order Fractal Differential Equations}
\author{Alireza Khalili Golmankhaneh$^1$, Claude Depollier$^2$, Diana Pham $^3$\\
$^1$ Department of Physics, Urmia Branch,\\ Islamic Azad University, Urmia 63896,West Azerbaijan,  Iran\\
alirezakhalili2002@yahoo.co.in\\
$^2$ DLaboratoire d'Acoustique de l'Universit\'{e} du Maine, Universit\'{e} du Maine, France\\
Claude.Depollier@univ-lemans.fr\\
$^3$ Department of Biology, University of Texas at Arlington, Arlington, TX 76019, USA\\
npham@mavs.uta.edu\\
}
\begin{document}

\maketitle

\let\thefootnote\relax
\footnotetext{ MSC2020:28A80} 
\footnote{Corresponding author Alireza Khalili Golmankhaneh}
\begin{abstract}
This paper provides a summary of the fractal calculus framework. It presents higher-order homogeneous and nonhomogeneous linear fractal differential equations with $\alpha$-order. Solutions for these equations with constant coefficients are obtained through the method of variation of parameters and the method of undetermined coefficients. The solution space for higher $\alpha$-order linear fractal differential equations is defined, showcasing its non-integer dimensionality. The solutions to $\alpha$-order linear fractal differential equations are graphically depicted to illustrate their non-differentiability. Additionally, equations of motion governing the behavior of two masses in fractal time are proposed and solved.
\end{abstract} 

\section{Introduction}
Fractal geometry was popularized by Benoit Mandelbrot \cite{Mandelbro}. Studying structures with fractal dimensions larger than their topological dimensions is the main focus of this field \cite{falconer1999techniques,jorgensen2006analysis}. These complex fractal formations are self-similar and frequently have non-integer, even complex, dimensions \cite{Qaswet, Lapidus}. Reduced fractals, characterized by different levels of self-similarity and magnification, were introduced along with the construction of spectra for these fractals \cite{pham2022spectra}. Complexity science principles apply to natural hazards and human systems. Disruptive events arise from emergent properties in complex, non-equilibrium systems. Fractal analysis aids insights into extreme event development, enhancing risk assessment for such systems \cite{Wanliss23}. The local times for the superprocess were proven to exist if the spectral dimension $d_{s}$ of the spatial motion satisfies $d_{s}<4$ \cite{hambly2023dimension}. The numerical evaluation of double integrals over self-similar fractal sets with weakly singular integrands were extended \cite{caetano2023integral}. The rough surface was characterized using surface fractal dimensions, revealing heightened sensitivity of shear-thinning fluids to surface roughness in comparison to Newtonian and shear-thickening fluids \cite{bouchendouka2022fractal}. However, because conventional geometric measures such as length, surface area, volume, and Hausdorff measure \cite{rogers1998hausdorff} are mostly intended for standard forms, fractal analysis presents special difficulties \cite{Ewqq}. As such, it becomes difficult to apply these metrics directly to fractal analysis \cite{Barnsley, Gregory, rosenberg2020fractal, Tosatti, bishop2017fractals, Shlomo}. Scholars have utilized diverse techniques to tackle the problem of fractal analysis. These strategies include the following: fractional space and nonstandard techniques \cite{stillinger1977axiomatic}, probabilistic methods \cite{Barlow}, fractional calculus \cite{klafter2012fractional,e25071008, uchaikin2013fractional,tuncc2023solution, Trifcebook}, measure theory \cite{giona1995fractal, freiberg2002harmonic, jiang1998some, Bongiorno23, bongiorno2018derivatives, bongiorno2015fundamental, bongiorno2015integral}, and nonstandard methodologies \cite{nottale2011scale}. An extension of classical calculus, fractal calculus is a mathematical framework that permits the treatment of equations whose solutions manifest as functions with fractal characteristics, including fractal sets and curves \cite{parvate2009calculus, parvate2011calculus}. Fractal calculus is especially attractive because of its elegant and algorithmic approaches, which are superior to other methods \cite{Alireza-book,golmankhaneh2023fuzzification}. The fractal discharging model of the  battery was introduced to explore the impact of non-locality on solution behavior and how the system's current state is influenced by its past \cite{AliGolmankhanehYilmazer}. Fractal differential equations have been solved using a variety of techniques, and their stability conditions have been established \cite{golmankhaneh2019sumudu,tuncc2020stability,tuncc2023ulam,Fourier1}. Weierstrass functions and fractal interpolation functions are analyzed to show how FC is applied. When evaluated through the prism of classical calculus, these functions frequently exhibit traits of non-differentiability and non-integrability \cite{gowrisankar2021fractal}. The study of Cantor cubes and Cantor tartan has been included in the scope of fractal calculus \cite{golmankhaneh2018fractalt}, and the Laplace equation has been explicitly formulated within this framework \cite{khalili2021laplace}. Gauge integral approach has been effectively applied to $F^{\alpha}$-calculus (FC) generalization. This generalization is concerned with the integration of functions over a particular subset \cite{golmankhaneh2016fractal} of the real number line that incorporates singularities occurring within fractal sets. It was shown that fractional Brownian motion on thin Cantor-like sets can be described using non-local fractal derivatives. The relationship between the fractal Hurst exponent and the order of non-local fractal derivatives is established by the proposal \cite{golmankhaneh2021fractalBro}.
In addition, fractal stochastic differential equations are defined, and processes such as diffusion and fractional Brownian motion within fractal-structured media are classified \cite{golmankhaneh2021equilibrium,khalili2019fractalcat,khalili2019random,banchuin2022noise,golmankhaneh2018sub}.
It was developed to do local vector calculus on fractals, on fractal continua, and within fractional-dimensional spaces. It was proposed that in spaces with non-integer dimensions, two different del-operators may be defined, each operating on a vector field and a scalar field. The standard formulation of the Laplacian and the fundamental vector differential operators in fractional-dimensional space was achieved by utilizing these del-operators. Furthermore, $F^{\alpha}$-derivatives of Laplacian and vector differential operators on fractals were established \cite{balankin2023vector}. The Laplace transform and the steps approach were used to solve fractal retarded, neutral, and renewal delay differential equations with constant coefficients \cite{golmankhaneh2023initial}.
By using nonstandard analysis, fractal integral and differential forms were defined \cite{khalili2023non}. Recent physics research has introduced fractal time, characterized by self-similar properties and fractional dimension. This study explores fractal time within the framework of economic models, utilizing both local and non-local fractal Caputo derivatives \cite{faghih2023introduction,khalili2021economic, Welch-5,Vrobel,Shlesinger-6, plonka1995fractal}. The analogue geodesic and Einstein field equations, emphasizing the significance and applicability of fractal geometry were given \cite{golmankhaneh2023einstein}. The  double-size cancer in the fractal temporal dimension across various mathematical models were  explored \cite{golmankhaneh2024modeling}. The fractal local Mellin transform and fractal non-local transform as tools for solving fractal differential equations were given \cite{khalili2023fractalgg,khalili2023fractalewwq}. The relationships between the traditional Fokker-Planck Equation  and its fractal counterpart, incorporating fractal derivatives was given \cite{megias2023dynamics}. The Fokker-Planck equation for a fractal comb with dimension $\alpha$,  incorporating fractal time, was derived, and its solution was provided \cite{golmankhaneh2023fractalrede888}. A fractal analogue of mechanics, inspired by Newton, Lagrange, Hamilton, and Appell, were proposed. Fractal velocity and acceleration were defined to establish the Langevin equation on fractal curves \cite{golmankhaneh2023classical}.
The method analogues, including the separable method and integrating factor technique, for solving $\alpha$-order differential equations were studied \cite{golmankhaneh2023solving}.\\
In this work, building upon the aforementioned research, we introduce higher-order $\alpha$-fractal differential equations and present a methodology for solving them.\\
The paper is structured as follows:\\
In Section \ref{1g}, a concise review of fractal calculus is presented. Section \ref{2g} introduces higher $\alpha$-order linear fractal differential equations. In Section \ref{3g}, the homogeneous higher $\alpha$-order linear fractal differential equation is introduced and solved. Section \ref{4g} provides nonhomogeneous higher $\alpha$-order linear fractal differential equations. Homogeneous higher $\alpha$-order linear fractal differential equations with constant coefficients are solved in Section \ref{5g}. The specific solutions for nonhomogeneous higher $\alpha$-order linear fractal differential equations are obtained using the variation of parameters and the method of undetermined coefficients in Section \ref{6g}. Finally, Section \ref{7g} presents the conclusion.

\section{Overview of Fractal Calculus on Fractal Sets\label{1g}}
In this section, we present a concise overview of fractal calculus applied to fractal sets as summarized in \cite{parvate2009calculus,parvate2011calculus,Alireza-book}.
\begin{definition}
The flag function of a set $F$ and a closed interval $I$ is defined as:
\begin{equation}
  \rho(F,I)=
  \begin{cases}
    1, & \text{if } F\cap I\neq\emptyset;\\
    0, & \text{otherwise}.
  \end{cases}
\end{equation}
\end{definition}

\begin{definition}
For a fractal set $F\subset [a,b]$, a subdivision $P_{[a,b]}$ of $[a,b]$, and a given $\delta>0$, the coarse-grained mass of $F\cap [a,b]$ is defined by
\begin{equation}
  \gamma_{\delta}^{\alpha}(F,a,b)=\inf_{|P|\leq
\delta}\sum_{i=0}^{n-1}\Gamma(\alpha+1)(t_{i+1}-t_{i})^{\alpha}
\rho(F,[t_{i},t_{i+1}]),
\end{equation}
where $|P|=\max_{0\leq i\leq n-1}(t_{i+1}-t_{i})$, and $0< \alpha\leq1$.
\end{definition}

\begin{definition}
The mass function of a fractal set $F$ is defined as the limit of the coarse-grained mass as $\delta$ approaches zero:
\begin{equation}
  \gamma^{\alpha}(F,a,b)=\lim_{\delta\rightarrow0}\gamma_{\delta}^{\alpha}(F,a,b).
\end{equation}
\end{definition}

\begin{definition}
The $\gamma$-dimension of $F\cap [a,b]$ is defined as:
\begin{align}
  \dim_{\gamma}(F\cap
[a,b])&=\inf\{\alpha:\gamma^{\alpha}(F,a,b)=0\}\nonumber\\&
=\sup\{\alpha:\gamma^{\alpha}(F,a,b)=\infty\}
\end{align}
\end{definition}

\begin{definition}
The integral staircase function of order $\alpha$ for a fractal set $F$ is given by:
\begin{equation}
 S_{F}^{\alpha}(x)=
 \begin{cases}
   \gamma^{\alpha}(F,a_{0},x), & \text{if } x\geq a_{0}; \\
   - \gamma^{\alpha}(F,x,a_{0}), & \text{otherwise}.
 \end{cases}
\end{equation}
where $a_{0}$ is an arbitrary fixed real number.
\end{definition}

\begin{definition}
Let $F$ be an $\alpha$-perfect fractal set, let $f$ be a function defined on F and let $x\in F.$ The $F^{\alpha}$-derivative of $f$ at the point $x$ is defined as follows:
\begin{equation}
  D_{F}^{\alpha}f(x)=
  \begin{cases}
    \underset{ y\rightarrow
x}{F_{-}\text{lim}}~\frac{f(y)-f(x)}{S_{F}^{\alpha}(y)-S_{F}^{\alpha}(x)}, & \text{if } x\in F; \\
    0, & \text{otherwise}.
  \end{cases}
\end{equation}
if the fractal limit $F_{-}\text{lim}$ exists \cite{parvate2009calculus}.
\end{definition}

\begin{definition}
Let $I=[a,b]$.~Let $F$ be an $\alpha$-perfect fractal set such that $S^{\alpha}_F$ is finite on $I$. Let $f$ be a bounded function defined on F and let $x\in F.$ The $F^{\alpha}$-integral of $f$ on $I$ is defined as:
\begin{align}
  \int_{a}^{b}f(x)d_{F}^{\alpha}x&=\sup_{P_{[a,b]}}
\sum_{i=0}^{n-1}\inf_{x\in F\cap
I}f(x)(S_{F}^{\alpha}(x_{i+1})-S_{F}^{\alpha}(x_{i}))
\nonumber\\&=\inf_{P_{[a,b]}}
\sum_{i=0}^{n-1}\sup_{x\in F\cap
I}f(x)(S_{F}^{\alpha}(x_{i+1})-S_{F}^{\alpha}(x_{i})).
\end{align}

\end{definition}

\section{Higher $\alpha$-Order Linear Fractal Differential Equations \label{2g}}
Within this section, we present higher-order linear fractal differential equations and their corresponding solutions. An $n^{th}$ $\alpha$-order linear fractal differential equation can be expressed in the following form:
\begin{align}\label{tyyy}
  P_{0}(x)D_{F}^{n\alpha}f(x)&+P_{1}(x)D_{F}^{(n-1)\alpha}f(x)\nonumber\\&
  +...+P_{n-1}(x)
  D_{F}^{\alpha}f(x)+P_{n}(x)f(x)=G(x).
\end{align}
In cases where the functions $P_{0}, \ldots, P_{n}$ and $G(x)$ exhibit $F$-continuity over the interval $I:a<x<b$, with the additional condition that $P_{0}$ is non-zero everywhere in this interval, dividing Eq.\eqref{tyyy} by $P_{0}(x)$ results in:
\begin{align}\label{iio232o}
  L[f]&=D_{F}^{n\alpha}f(x)+p_{1}(x)D_{F}^{(n-1)\alpha}f(x)\nonumber\\&+...+
  p_{n-1}(x)D_{F}^{\alpha}f(x)+p_{n}(x)f(x)=g(x).
\end{align}
To derive a unique solution for Eq. \eqref{iio232o}, it is necessary to have $n$ initial conditions, as outlined below:
\begin{align}\label{iiinnmj}
  f(x_{0})&=f_{0},~~~D_{F}^{\alpha}f(x)|_{x=x_{0}}=D_{F}^{\alpha}f(x_{0})\nonumber\\&,...,
  D_{F}^{\alpha(n-1)}f(x)|_{x=x_{0}}=D_{F}^{\alpha(n-1)}f(x_{0}).
\end{align}
Here, $x_{0}$ represents any point within the interval $(a, b)$, and $$f_{0}, D_{F}^{\alpha}f(x_{0}),...,D_{F}^{\alpha(n-1)}f(x_{0}),$$ can take  any  real constant numbers.
\begin{definition}
The solution space for higher $\alpha$-order linear fractal differential equations has a dimensionality of $n\alpha$.
\end{definition}

\section{The Homogeneous Higher $\alpha$-Order Linear Fractal Differential Equation \label{3g}}
The $n^{th}$ $\alpha$-order fractal differential equation in its homogeneous form can be expressed as:
\begin{align}\label{ioooploikju}
  L(f)&=D_{F}^{n\alpha}f(x)+p_{1}D_{F}^{(n-1)\alpha}f(x)+...+
  p_{n-1}D_{F}^{\alpha}f(x)+p_{n}(t)f(x)=0,\nonumber\\&
  =f^{n\alpha}(x)+p_{1}f^{(n-1)\alpha}(x)+...+
  p_{n-1}f^{\alpha}(x)+p_{n}(t)f(x)=0.
\end{align}
If the functions $f_{1}, f_{2}, \ldots, f_{n}$ constitute solutions to Eq.\eqref{ioooploikju}, it consequently follows that:
\begin{equation}\label{yhubvdfgd}
  f(x)=c_{1}f_{1}(x)+c_{2}f_{2}(x)+...+c_{n}f_{n}(x),
\end{equation}
 where $c_{1}, c_{2}, \ldots, c_{n}$ represent arbitrary constants, any solution of Eq.\eqref{yhubvdfgd} can be expressed as:
\begin{align}\label{ijuyy}
  c_{1}f_{1}(x_{0})+\cdots+c_{n}f_{n}(x_{0})&=f(x_{0})=f_{0}\nonumber\\
  c_{1}f_{1}^{\alpha}(x_{0})+\cdots+c_{n}f_{n}^{\alpha}(x_{0})&=
  f^{\alpha}(x_{0})=f^{\alpha}_{0}\nonumber\\
  &\vdots\nonumber\\
  c_{1}f_{1}^{(n-1)\alpha}(x_{0})+\cdots+c_{n}f_{n}^{(n-1)\alpha}(x_{0})&=
  f^{(n-1)\alpha}(x_{0})=f^{(n-1)\alpha}_{0},
\end{align}
here $x\in (a,b)$ and $D_{F}^{n\alpha}f(x)=f^{n\alpha}(x)$.

For arbitrary values of $f_{0}, f^{\alpha}{0}, \ldots, f^{(n-1)\alpha}{0}$, a necessary and sufficient condition for the existence of a solution to the system of algebraic equations in \eqref{ijuyy} is that the Wronskian be:
\begin{equation}\label{trf23}
  W[f_{0},f_{1},\cdots,f_{n}]=\begin{vmatrix}
f_{1} & f_{2} & \cdots &  f_{n}\\
f_{1}^{\alpha} & f_{2}^{\alpha} & \cdots &  f_{n}^{\alpha} \\
\vdots & \vdots & \cdots &  \vdots \\
f_{1}^{(n-1)\alpha} & f_{2}^{(n-1)\alpha} & \cdots &  f_{n}^{(n-1)\alpha}
\end{vmatrix}
\end{equation}
To ensure that the Wronskian $W[f_{0}, f_{1}, \ldots, f_{n}]$ is non-zero at $x=x_{0}$, where $x_{0}\in (a,b)$, it is necessary and sufficient that $W[f_{0}, f_{1}, \ldots, f_{n}]$ is non-zero at every point within the interval $(a,b)$. Consequently, we establish the following theorem:
\begin{theorem}\label{rtee}
If the functions $p_{1}, p_{2}, \ldots, p_{n}$ are $F$-continuous on the open interval $(a,b)$, and $f_{1}, f_{2}, \ldots, f_{n}$ are solutions to equation \eqref{ioooploikju}, with $W[f_{1}, f_{2}, \ldots, f_{n}]\neq 0$ for at least one point in $(a,b)$, then every solution of equation \eqref{ioooploikju} can be represented as a linear combination of the solutions $f_{1}, f_{2}, \ldots, f_{n}$.
  \end{theorem}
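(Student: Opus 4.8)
The plan is to reduce the statement to the unique solvability of a single linear algebraic system, combining the linearity of the operator $L$ with an existence-and-uniqueness principle for the fractal initial value problem. First I would record that $L$ is linear: the $F^{\alpha}$-derivative $D_{F}^{\alpha}$ is defined as a fractal limit of difference quotients and therefore inherits additivity and homogeneity, so each iterate $D_{F}^{k\alpha}$ is linear, and hence $L[c_{1}f_{1}+\cdots+c_{n}f_{n}]=c_{1}L[f_{1}]+\cdots+c_{n}L[f_{n}]$ for any real constants $c_{j}$. In particular, any linear combination of the solutions $f_{1},\ldots,f_{n}$ is again a solution of \eqref{ioooploikju}, which is the easy half of the claim.

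Next I would invoke the fractal existence-uniqueness theorem: because $p_{1},\ldots,p_{n}$ are $F$-continuous on $(a,b)$, for each $x_{0}\in(a,b)$ and each prescribed initial data $f_{0},f^{\alpha}_{0},\ldots,f^{(n-1)\alpha}_{0}$ the problem \eqref{ioooploikju}--\eqref{iiinnmj} admits exactly one solution on $(a,b)$; this is the $F^{\alpha}$-analogue of Picard--Lindel\"of, obtained by rewriting the equation as a fractal integral equation and running successive approximations in the $F$-continuous setting. With this in hand, fix a point $x_{0}\in(a,b)$ at which $W[f_{1},\ldots,f_{n}]\neq 0$ (the remark preceding the theorem shows the Wronskian is then nonzero throughout $(a,b)$, though one point suffices here). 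The system \eqref{ijuyy} in the unknowns $c_{1},\ldots,c_{n}$ has coefficient matrix exactly the Wronskian matrix at $x_{0}$, and since its determinant $W[f_{1},\ldots,f_{n}](x_{0})$ is nonzero the matrix is invertible, so the system has a unique solution $(c_{1},\ldots,c_{n})$ by Cramer's rule.

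To finish, let $\phi$ be an arbitrary solution of \eqref{ioooploikju} and set the right-hand data in \eqref{ijuyy} to be $\phi(x_{0}),\phi^{\alpha}(x_{0}),\ldots,\phi^{(n-1)\alpha}(x_{0})$. Solving as above produces constants $c_{1},\ldots,c_{n}$; define $\psi=c_{1}f_{1}+\cdots+c_{n}f_{n}$. By the linearity established in the first step $\psi$ solves \eqref{ioooploikju}, and by the construction of the $c_{j}$ it satisfies $\psi^{(k)\alpha}(x_{0})=\phi^{(k)\alpha}(x_{0})$ for every $0\leq k\leq n-1$. Thus $\psi$ and $\phi$ are two solutions of the same initial value problem at $x_{0}$, and the uniqueness clause forces $\psi\equiv\phi$ on $(a,b)$, which is precisely the asserted representation of $\phi$ as a linear combination of $f_{1},\ldots,f_{n}$.

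I expect the main obstacle to be the existence-uniqueness theorem itself: everything else is linear algebra together with the linearity of $D_{F}^{\alpha}$, but justifying unique solvability of the fractal IVP requires a genuine fractal fixed-point argument adapted to $F$-continuous coefficients and the staircase function $S_{F}^{\alpha}$. If this has already been established in the prior work on $\alpha$-order fractal equations it can simply be cited; otherwise it must be proved first, and the delicate point there is controlling the fractal integrals of the successive iterates so that the approximation scheme converges.
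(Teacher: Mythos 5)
Your proposal is correct and is in fact considerably more complete than the argument the paper itself gives. The paper's proof consists only of writing down the homogeneous algebraic system $k_{1}f_{1}(x)+\cdots+k_{n}f_{n}(x)=0$ together with its $F^{\alpha}$-derivatives up to order $(n-1)\alpha$, and observing that the coefficient determinant of that system is the Wronskian $W[f_{1},\ldots,f_{n}]$; it never constructs the linear combination representing a given solution, never invokes uniqueness of an initial value problem, and never actually concludes --- indeed that homogeneous system is really the setup for the linear-independence claim of the paper's next theorem, not for the representation claim stated here. Your argument is the full classical one transplanted to the fractal setting: take an arbitrary solution $\phi$, match its initial data at a point $x_{0}$ where $W\neq 0$ by solving the nonhomogeneous system \eqref{ijuyy} via Cramer's rule, use linearity of $L$ to see that $\psi=c_{1}f_{1}+\cdots+c_{n}f_{n}$ is a solution with the same data, and finish by uniqueness of the fractal initial value problem. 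The one caveat --- which you identified precisely --- is that existence-uniqueness for the fractal IVP \eqref{iiinnmj} is nowhere proved in the paper; the paper even appeals later to Theorem \ref{rtee} itself as the source of that uniqueness (in the proof of its second theorem), which is circular. So your reliance on a fractal Picard--Lindel\"of theorem is not a defect of your proof relative to the paper's: it is the same unacknowledged hypothesis on which the paper's whole section rests, and your writeup is the one that makes it explicit and places it where it belongs.
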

  \begin{proof}
  Consider the equation
\begin{equation}\label{ionhhplm}
k_{1}f_{1}(x)+k_{2}f_{2}(x)+\ldots+k_{n}f_{n}(x)=0.
\end{equation}
By repeatedly differentiating \eqref{ionhhplm}, we obtain an additional $n-1$ equations:

\begin{align}\label{ioknbvcf}
k_{1}f_{1}^{\alpha}(x)+k_{2}f_{2}^{\alpha}(x)+\ldots+k_{n}f_{n}^{\alpha}(x)&=0
\nonumber\\
&\vdots \nonumber\\
k_{1}f_{1}^{(n-1)\alpha}(x)+k_{2}f_{2}^{(n-1)\alpha}(x)
+\ldots+k_{n}f_{n}^{(n-1)\alpha}(x)&=0
\end{align}
These equations form a system of algebraic equations for $n$ unknowns $k_{1}, \ldots, k_{n}$. The determinant of the coefficients for this system is the Wronskian $W[f_{1},f_{2}, \ldots, f_{n}]$ of $f_{1}, f_{2}, \ldots, f_{n}$.
\end{proof}
\begin{theorem}
If $f_{1}(x), \ldots, f_{n}(x)$ constitute a fundamental set of solutions for the homogeneous $n^{th}$ order linear fractal differential equation:
\begin{equation}\label{ioa}
  L[f]=f^{n\alpha}(x)+p_{1}(x)f^{(n-1)\alpha}(x)+\cdots p_{n-1}(x)f^{\alpha}(x)+p_{n}(x)f(x)=0,
\end{equation}
defined on an interval $(a,b)$, then $f_{1}(x), \ldots, f_{n}(x)$ are linearly independent on $(a,b)$. Conversely, if $f_{1}(x), \ldots, f_{n}(x)$ are linearly independent solutions of equation \eqref{ioa} on $(a,b)$, then they form a fundamental set of solutions on $(a,b)$.
 \end{theorem}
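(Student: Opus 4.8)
The plan is to establish both implications through the Wronskian criterion, leaning on the linearity of the operator $L$ and on the uniqueness of solutions to the initial value problem \eqref{iiinnmj}. Throughout, I read ``fundamental set of solutions'' in the sense made precise by Theorem \ref{rtee}: a collection $f_{1}, \ldots, f_{n}$ of solutions of \eqref{ioa} whose Wronskian $W[f_{1}, \ldots, f_{n}]$ is nonzero at some point of $(a,b)$, hence (by the remark preceding Theorem \ref{rtee}) at every point.

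For the forward direction I would assume $f_{1}, \ldots, f_{n}$ form a fundamental set and suppose $k_{1}f_{1}(x) + \cdots + k_{n}f_{n}(x) = 0$ identically on $(a,b)$. Applying the $F^{\alpha}$-derivative repeatedly, exactly as in \eqref{ioknbvcf}, produces the linear algebraic system whose coefficient determinant is precisely $W[f_{1}, \ldots, f_{n}]$. Evaluating at a point where this Wronskian is nonzero forces $k_{1} = \cdots = k_{n} = 0$, so the $f_{i}$ are linearly independent. This half is routine once the Wronskian is known to be nonvanishing.

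For the converse I would argue by contraposition: assuming the set is \emph{not} fundamental, i.e. $W[f_{1}, \ldots, f_{n}](x_{0}) = 0$ for some (equivalently every) $x_{0}\in(a,b)$, I must exhibit a nontrivial dependence. Since the determinant vanishes at $x_{0}$, the homogeneous system
\[
c_{1}f_{1}^{j\alpha}(x_{0}) + \cdots + c_{n}f_{n}^{j\alpha}(x_{0}) = 0, \qquad j = 0, 1, \ldots, n-1,
\]
admits a nonzero solution $(c_{1}, \ldots, c_{n})$. The function $\phi = c_{1}f_{1} + \cdots + c_{n}f_{n}$ is then a solution of \eqref{ioa} by linearity of $L$, and by construction it carries the zero initial data $\phi(x_{0}) = \phi^{\alpha}(x_{0}) = \cdots = \phi^{(n-1)\alpha}(x_{0}) = 0$. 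Invoking the existence--uniqueness statement attached to \eqref{iiinnmj}, the identically zero function is the only solution with these initial conditions, so $\phi \equiv 0$ on $(a,b)$, which is exactly a nontrivial linear dependence among the $f_{i}$.

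The main obstacle is the converse: it does not follow from the Wronskian in isolation but requires two supporting facts. First, the dichotomy that $W$ is either identically zero or nowhere zero on $(a,b)$ --- an Abel-type result in the fractal setting, which I would justify by showing $W$ itself satisfies a first-order linear $F^{\alpha}$-equation. Second, the uniqueness theorem for the $\alpha$-order fractal initial value problem, without which the passage from zero initial data to $\phi \equiv 0$ collapses. I would therefore verify that both are in force before running the contrapositive argument.
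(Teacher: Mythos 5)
Your proposal is correct and takes essentially the same route as the paper: the forward direction via the nonvanishing Wronskian and the algebraic system \eqref{ionhhplm}--\eqref{ioknbvcf}, and the converse by forming a nontrivial linear combination with zero initial data at a point where the Wronskian vanishes and then invoking uniqueness of the fractal initial value problem (the paper phrases this as a contradiction rather than a contraposition, which is immaterial). Your closing caveat is well taken and is in fact a point where you are more careful than the source: the paper cites Theorem \ref{rtee} for the uniqueness step, but that theorem does not actually assert uniqueness of solutions to the initial value problem, and neither it nor the Abel-type dichotomy for the Wronskian is proved in the paper --- both are used as if established.
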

\begin{proof}
To establish this theorem, let's initially assume that $f_{1}(x), \ldots, f_{n}(x)$ constitute a fundamental set of solutions for the homogeneous fractal differential equation \eqref{ioa} on $(a,b)$. Consequently, the Wronskian $Wf_{1}, \ldots, f_{n}\neq 0$ for every $x\in (a,b)$. Therefore, the system \eqref{ionhhplm} and \eqref{ioknbvcf} has only the solution $k_{1}=\ldots=k_{n}=0$ for every $x$ in $(a,b)$. Consequently, $f_{1}(x), \ldots, f_{n}(x)$ must be linearly independent on $(a,b)$.

To demonstrate the converse, let $f_{1}(x), \ldots, f_{n}(x)$ be linearly independent on $(a,b)$. In order to show that they form a fundamental set of solutions, it is necessary to demonstrate that their Wronskian is never zero in $(a,b)$. Suppose, for the sake of contradiction, that this is not true, indicating that there exists at least one point $x_{0}$ where the Wronskian is zero. At this point, the system \eqref{ionhhplm} and \eqref{ioknbvcf} has a non-zero solution denoted by $k^{}_{1}, \ldots, k^{}_{n}$. Forming the linear combination:

\begin{equation}\label{ooil}
\psi(t)=k^{}{1}f{1}(x)+\ldots+k^{}{n}f{n}(x).
\end{equation}

Then $f=\psi(t)$ satisfies the initial value problem:

\begin{equation}\label{ereww}
L[f]=0,~~~~f(x_{0})=0,~f^{\alpha}(x_{0})=0,\ldots,f^{(n-1)\alpha}(x_{0})=0.
\end{equation}

The function $\psi$ satisfies the fractal differential equation because it is a linear combination of solutions, and it satisfies the initial conditions because these are just the equations in the system \eqref{ionhhplm} and \eqref{ioknbvcf} evaluated at $x_{0}$. However, the function $f(x)=0$ for all $x\in (a,b)$ is also a solution of this initial value problem, and by Theorem \ref{rtee}, the solution to the initial value problem \eqref{ereww} is unique. Thus, $\psi(t)=0$ for all $x$ in $(a,b)$. Consequently, $f_{1}(x), \ldots, f_{n}$ are linearly dependent on $(a,b)$, which is a contradiction. Hence, the assumption that there is a point where the Wronskian is zero is untenable. Hence, the Wronskian is never zero on $(a,b)$, as was to be proved.
\end{proof}
\section{The Nonhomogeneous Higher $\alpha$-Order  Linear Fractal Differential Equation \label{4g}}
In this section, we introduce nonhomogeneous $n^{th}$ $\alpha$-order fractal differential equations along with their solutions.\\
Consider the nonhomogeneous $n^{th}$ $\alpha$-order fractal differential equation:
\begin{equation}\label{ijnb741}
  L[f]=f^{n\alpha}(x)+p_{1}(x)f^{(n-1)\alpha}(x)+...+p_{n}(x)f(x)=g(x).
\end{equation}
If $F_{1}$ and $F_{2}$ are any two solutions, it immediately follows from the linearity of the fractal operator $L$ that:
\begin{equation}\label{tyrqplm}
  L[F_{1}-F_{2}](x)=L[F_{1}](x)-L[F_{2}](x)=g(x)-g(x)=0.
\end{equation}
Hence, the difference of any two solutions of the nonhomogeneous fractal differential Eq.\eqref{ijnb741} is a solution of the homogeneous fractal differential equation Eq.\eqref{ioooploikju}. Since any solution of the nonhomogeneous equation can be expressed as a linear combination of a fundamental set of solutions $f_{1}, f_{2}, \ldots, f_{n}$, it follows that any solution of the nonhomogeneous fractal differential equation Eq.\eqref{ijnb741} can be written as:
\begin{equation}\label{ijnmkiuyh}
 f(x)=c_{1}f_{1}(x)+c_{2}f_{2}(x)+...+c_{n}f_{n}(x)+F(x),
\end{equation}
where $F(x)$ is some particular solution of Eq.\eqref{ijnb741}. The linear combination Eq.\eqref{ijnmkiuyh} is called the general solution of the nonhomogeneous Eq.\eqref{ijnb741}.
\section{Homogeneous Higher $\alpha$-Order Linear Fractal Differential Equations with Constant Coefficients \label{5g} }
Consider the $n^{th}$ $\alpha$-order linear homogeneous fractal differential equation:
\begin{equation}\label{iijnbvc4}
  L[f]=a_{0}f^{n\alpha}(x)+a_{1}f^{(n-1)\alpha}(x)+...+a_{n}f^{\alpha}(x)+a_{n}f(x)=0.
\end{equation}
Here, $a_{0},a_{1},...,a_{n}$ are real constants, and $a_{0}\neq0$. It is natural to anticipate that $f=\exp(rS_{F}^{\alpha}(x))$ is a solution of Eq.\eqref{iijnbvc4} for suitable values of $r$. Namely:
 \begin{align}\label{ioofcv}
   L[\exp(rS_{F}^{\alpha}(x))]&=\exp(rS_{F}^{\alpha}(x))
   (a_{0}r^{n}+a_{1}r^{n-1}+...+a_{n-1}r+a_{n}\nonumber\\
   &=\exp(rS_{F}^{\alpha}(x))Z(r)
 \end{align}
 Here
 \begin{equation}\label{rftrde}
   Z(r)=a_{0}r^{n}+a_{1}r^{n-1}+...+a_{n-1}r+a_{n}.
 \end{equation}
  For the values of $r$ for which $Z(r)=0$, it follows that $L[\exp(rS_{F}^{\alpha}(x))]=0$, and $f=\exp(rS_{F}^{\alpha}(x))$ is a solution of Eq.\eqref{iijnbvc4}. The equation $Z(r)=0$ is the characteristic equation of Eq.\eqref{iijnbvc4}. Since $a_{0}\neq0$, $Z(r)$ has $n$ roots, namely, $r_{1},r_{2},\ldots,r_{n}$. Some roots may be equal, and some may be complex; therefore, we can write:
\begin{equation}\label{rtre}
  Z(r)=a_{0}(r-r_{1})(r-r_{2})...(r-r_{n}).
\end{equation}
We have the following cases:
\begin{enumerate}
  \item Real and Unequal Roots:
  If the roots of the characteristic equation are real and distinct, we have $n$ distinct solutions:
  \begin{equation}\label{iiiii}
    \exp(r_{1}S_{F}^{\alpha}(x)),
    \exp(r_{2}S_{F}^{\alpha}(x)),...,\exp(r_{n}S_{F}^{\alpha}(x))
  \end{equation}
  If these functions are linearly independent, the solution of Eq.\eqref{iijnbvc4} is:
  \begin{equation}\label{yhubhj}
    f=c_{1} \exp(r_{1}S_{F}^{\alpha}(x))+c_{2} \exp(r_{2}S_{F}^{\alpha}(x))+...+c_{n} \exp(r_{n}S_{F}^{\alpha}(x))
  \end{equation}
  \item Complex roots:
  Since the coefficients $a_{0}, a_{1}, \ldots,a_{n}$ are real numbers, if the characteristic equation has complex roots that occur in conjugate pairs $\lambda\pm i\mu$.
  \item  Repeated roots: If a root of $Z(r)=0$, say $r=r_{1}$, has multiplicity $s\leq n$, then the solutions are:
      \begin{align}\label{ioopnju}
        &\exp(r_{1}S_{F}^{\alpha}(x)), S_{F}^{\alpha}(x)\exp(r_{1}S_{F}^{\alpha}(x)),
        S_{F}^{\alpha}(x)^{2}\exp(r_{1}S_{F}^{\alpha}(x))\nonumber\\&,...,
        S_{F}^{\alpha}(x)^{s-1}\exp(r_{1}S_{F}^{\alpha}(x)),
      \end{align}
      If a complex root $\lambda+i\mu$ is repeated $s$ times, then we have $2s$ complex-valued solutions:
      \begin{align}\label{iokj}
      &  \exp((\lambda+i\mu)S_{F}^{\alpha}(x)),
        S_{F}^{\alpha}(x)\exp((\lambda+i\mu)S_{F}^{\alpha}(x)),...,\nonumber\\&
        S_{F}^{\alpha}(x)^{s-1}\exp((\lambda+i\mu)S_{F}^{\alpha}(x)),\nonumber\\&
        \exp((\lambda-i\mu)S_{F}^{\alpha}(x)),
        S_{F}^{\alpha}(x)\exp((\lambda-i\mu)S_{F}^{\alpha}(x)),...,\nonumber\\&
        S_{F}^{\alpha}(x)^{s-1}\exp((\lambda-i\mu)S_{F}^{\alpha}(x))
      \end{align}
      which leads to $2s$ real-valued solutions:
      \begin{align}\label{iooppmki}
       & \exp(\lambda S_{F}^{\alpha}(x))\cos(\mu S_{F}^{\alpha}(x)),\exp(\lambda S_{F}^{\alpha}(x))\sin(\mu S_{F}^{\alpha}(x)),\nonumber\\& S_{F}^{\alpha}(x))\exp(\lambda S_{F}^{\alpha}(x))\cos(\mu S_{F}^{\alpha}(x)), S_{F}^{\alpha}(x))\exp(\lambda S_{F}^{\alpha}(x))\sin(\mu S_{F}^{\alpha}(x))\nonumber\\&,...,S_{F}^{\alpha}(x))^{s-1}\exp(\lambda S_{F}^{\alpha}(x))\cos(\mu S_{F}^{\alpha}(x))\nonumber\\&,S_{F}^{\alpha}(x))^{s-1}\exp(\lambda S_{F}^{\alpha}(x))\sin(\mu S_{F}^{\alpha}(x))
      \end{align}
\end{enumerate}

\begin{example}
Consider the $4^{th}$ $\alpha$-order fractal differential equation:
\begin{equation}\label{eewsa}
  f^{4\alpha}(x)+2f^{2\alpha}(x)+f(x)=0.
\end{equation}
Since the roots of the characteristic equation are $r_{1}=i$ and $r_{2}=-i$ with multiplicity $2$, the general solution of Eq.\eqref{eewsa} is given by:
\begin{align}\label{deerr}
  &f(x)=c_{1}\cos(S_{F}^{\alpha}(x))+c_{2}\sin(S_{F}^{\alpha}(x))+c_{3}
  S_{F}^{\alpha}(x)\cos(S_{F}^{\alpha}(x))\nonumber\\&+c_{4}
  S_{F}^{\alpha}(x)\sin(S_{F}^{\alpha}(x)),
\end{align}

In Figure \ref{Esre}, Eq.\eqref{deerr} is plotted for the case of $c_{1}=c_{2}=c_{3}=c_{4}$.
\begin{figure}
  \centering
  \includegraphics[scale=0.5]{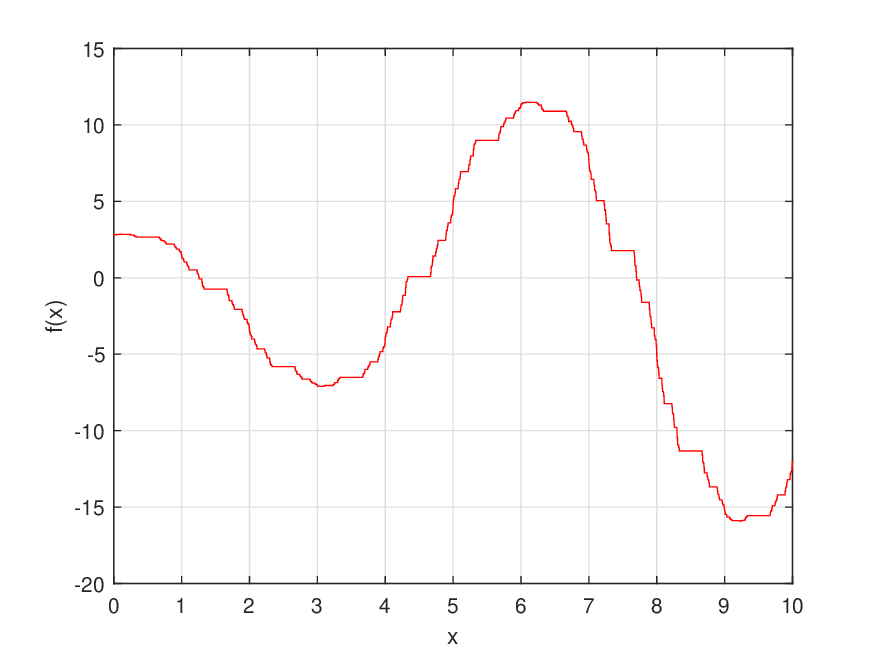}
  \caption{The graph of Eq.\eqref{deerr}.}\label{Esre}
\end{figure}
\end{example}
\section{Solving Nonhomogeneous Higher $\alpha$-Order Linear Fractal Differential Equations \label{6g}}
In this section, we discuss techniques for determining a specific solution to nonhomogeneous higher $\alpha$-order linear fractal differential equations. Specifically, we explore two methods: the method of undetermined coefficients and the method of variation of parameters.

\subsection{The Method of Undetermined Coefficients}
Consider an $n^{th}$ $\alpha$-order linear fractal differential equation with constant coefficients given by:
\begin{equation}\label{yttgfser}
  L[f]=a_{0}f^{n\alpha}(x)+a_{1}f^{(n-1)\alpha}(x)+...+a_{n-1}f^{\alpha}(x)+
  a_{n}f(x)=g(x),
\end{equation}
If $g(x)$ can be expressed as a sum of polynomials, exponentials, sines, cosines, or products of such functions, the method of undetermined coefficients is employed. To find a particular solution $F(x)$, a suitable combination of these functions is chosen.
\begin{example}
Consider a $3^{rd}$ $\alpha$-order linear fractal differential equation:
\begin{equation}\label{iiiio}
  f^{3\alpha}(x)-3f^{2\alpha}(x)+3f^{\alpha}(x)-f(x)=4\exp(S_{F}^{\alpha}(x)),
\end{equation}
The solution to the characteristic equation reveals a root $r=1$ with multiplicity $3$. Thus, the general solution to the homogeneous part of Eq.\eqref{iiiio} is given by:
\begin{equation}\label{ppkjuhy}
  f_{c}(x)=c_{1}\exp(S_{F}^{\alpha}(x))+
  c_{2}S_{F}^{\alpha}(x)\exp(S_{F}^{\alpha}(x))+
  c_{3}S_{F}^{\alpha}(x)^{2}\exp(S_{F}^{\alpha}(x)).
\end{equation}
To find a particular solution $F(x)$ for Eq.\eqref{iiiio}, let's suppose that $F(x)=A\exp(S_{F}^{\alpha}(x))$. By considering the form in Eq.\eqref{ppkjuhy}, we modify $F(x)$ to $F(x)=AS_{F}^{\alpha}(x)^{3}\exp(S_{F}^{\alpha}(x))$, where $A$ is a determined coefficient. To find the correct value of $A$, we $F^{\alpha}$-differentiate $F(x)$ three times and substitute for $f$ and its fractal derivatives in Eq.\eqref{iiiio}, leading to:
\begin{equation}\label{iokmnbv}
  6A\exp(S_{F}^{\alpha}(x))=4\exp(S_{F}^{\alpha}(x))
\end{equation}
Solving for $A$, we get $A=\frac{2}{3}$. Thus, the particular solution is:
\begin{equation}\label{iomnbv}
  F(x)=\frac{2}{3}S_{F}^{\alpha}(x)^{3}\exp(S_{F}^{\alpha}(x)).
\end{equation}
The general solution to the nonhomogeneous fractal differential equation Eq.\eqref{iiiio} is then given by:
\begin{align}\label{uuuuu5}
  f(x)=c_{1}\exp(S_{F}^{\alpha}(x))+
  c_{2}S_{F}^{\alpha}(x)\exp(S_{F}^{\alpha}(x))+&
  c_{3}S_{F}^{\alpha}(x)^{2}\exp(S_{F}^{\alpha}(x))\nonumber\\&+
  \frac{2}{3}S_{F}^{\alpha}(x)^{3}\exp(S_{F}^{\alpha}(x)).
\end{align}
In Figure \ref{Espplre}, Eq.\eqref{uuuuu5} has been graphed for the specific case of $c_{1}=c_{2}=c_{3}=1$.
\begin{figure}
  \centering
  \includegraphics[scale=0.5]{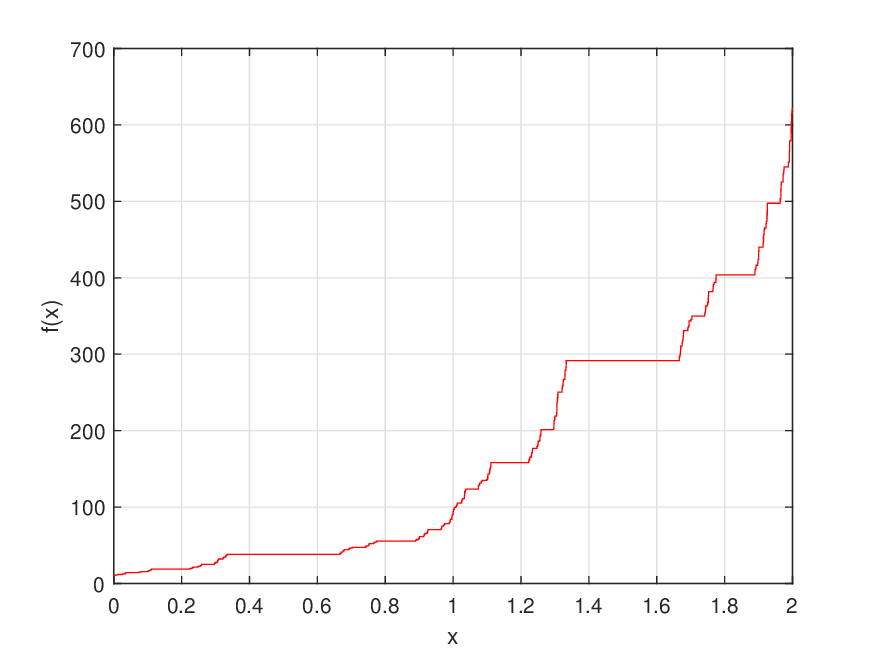}
  \caption{The graph of Eq.\eqref{uuuuu5}.}\label{Espplre}
\end{figure}
\end{example}

\subsection{The Method of Variation of Parameters}
The variation of parameters method for determining a particular solution of a nonhomogeneous $n^{th}$ $\alpha$-order fractal linear differential equation is as follows. Consider the equation:
\begin{equation}\label{ijnyh}
  L[f]=f^{\alpha n}(x)+p_{1}(x)f^{\alpha (n-1)}(x)+...+p_{n-1}(x)f^{\alpha}(x)+
  p_{n}(x)f(x)=g(x),
\end{equation}
Suppose that $f_{1}, f_{2}, ..., f_{n}$ are fundamental solutions of the homogeneous part of Eq.\eqref{ijnyh}. The general solution is then expressed as:
\begin{equation}\label{iijjnbbgt}
  f_{c}(x)=c_{1}f_{1}(x)+c_{2}f_{2}(x)+...+c_{n}f_{n}(x).
\end{equation}
To find a particular solution $F(x)$ of Eq.\eqref{ijnyh}, assume:
\begin{equation}\label{reqaw}
  F(x)=u_{1}(x)y_{1}(x)+u_{2}(x)y_{2}(x)+...+u_{n}(x)y_{n}(x)
\end{equation}
Now, find the functions $u_{1}, u_{2}, ..., u_{n}$. By taking the fractal derivative of Eq.\eqref{reqaw}, we get:
\begin{align}\label{ijreqaw}
  F^{\alpha}(x)&=(u_{1}(x)f_{1}^{\alpha}(x)+u_{2}(x)f_{2}^{\alpha}(x)+...+
  u_{n}(x)f_{n}^{\alpha}(x))\nonumber\\&+
  (u_{1}^{\alpha}(x)f_{1}(x)+u_{2}^{\alpha}(x)f_{2}(x)+...+u_{n}^{\alpha}(x)f_{n}(x))
\end{align}
 The first condition to find $u_{1}, u_{2}, ..., u_{n}$ is:
\begin{equation}\label{rdewse}
  u_{1}^{\alpha}(x)f_{1}(x)+u_{2}^{\alpha}(x)f_{2}(x)+...+u_{n}^{\alpha}(x)f_{n}(x)=0
\end{equation}
Continue this process by calculating successive fractal derivatives $F^{2\alpha}, ..., F^{\alpha(n-1)}$. After each $F^{\alpha}$-differentiation, set the sum of terms involving the fractal derivative of $u_{1}, u_{2}, ..., u_{n}$ equal to zero. This leads to $n-2$ further conditions:
\begin{equation}\label{ioj}
  u_{1}^{\alpha}(x)f_{1}^{m\alpha}(x)+
  u_{2}^{\alpha}(x)f_{2}^{m\alpha}(x)+...+u_{n}^{\alpha}(x)f_{n}^{m\alpha}(x)=0,~~~
  m=1,2,...,n-2.
\end{equation}
As a result, the expression for $F^{2\alpha}, ..., F^{(n-1)\alpha}$ reduces to:
\begin{equation}\label{tygh}
  F^{m\alpha}(x)=u_{1}(x)f_{1}^{m \alpha }(x)+u_{2}(x)f_{2}^{ m \alpha}(x)+...+u_{n}(x)f_{n}^{m \alpha}(x),
\end{equation}
 Finally, impose the condition that $F$ must be a solution of Eq.\eqref{ijnyh}. The $n^{th}$ $\alpha$-order differential of $F$ is:
 \begin{align}\label{iojplm}
   F^{n\alpha}(x)&=(u_{1}(x)f_{1}^{n \alpha }(x)+u_{2}(x)f_{2}^{ n \alpha}(x)+...+u_{n}(x)f_{n}^{n \alpha}(x))\nonumber\\&+
   (u_{1}^{\alpha}(x)f_{1}^{(n-1) \alpha }(x)+u_{2}^{\alpha}(x)f_{2}^{ (n-1) \alpha}(x)+...+u_{n}^{\alpha}(x)f_{n}^{(n-1) \alpha}(x))
 \end{align}
 Now, substitute $F(x)$ and its fractal derivatives into Eq.\eqref{ijnyh}, and group the terms involving each of the functions $f_{1}, f_{2}, ..., f_{n}$ and their fractal derivatives. Most terms drop out because each $f_{1}, f_{2}, ..., f_{n}$ satisfies $L[f_{i}]=0$ for $i=1,2,...,n$. The remaining terms yield the relation:
\begin{equation}\label{ijnhyubgt}
  u_{1}^{\alpha}(x)f_{1}^{(n-1) \alpha }(x)+u_{2}^{\alpha}(x)f_{2}^{(n-1) \alpha }(x)+...+u_{n}^{\alpha}(x)f_{n}^{(n-1) \alpha }(x)=g(x)
\end{equation}
Eqs. \eqref{ijnhyubgt}, \eqref{rdewse}, and the $n-2$ Eqs. \eqref{ioj} provide $n$ simultaneous linear nonhomogeneous algebraic equations for $u_{1}^{\alpha}, u_{2}^{\alpha}, ..., u_{n}^{\alpha}$:
\begin{align}\label{rqaqqa}
  f_{1}u_{1}^{\alpha}+f_{2}u_{2}^{\alpha}+...+f_{n}u_{n}^{\alpha}&=0\nonumber\\
  f_{1}^{\alpha}u_{1}^{\alpha}+f_{2}^{\alpha}u_{2}^{\alpha}+...
  +f_{n}^{\alpha}u_{n}^{\alpha}&=0\nonumber\\
  f_{1}^{2\alpha}u_{1}^{\alpha}+f_{2}^{2\alpha}u_{2}^{\alpha}+...
  +f_{n}^{2\alpha}u_{n}^{\alpha}&=0\nonumber\\
  &\vdots\nonumber\\
  f_{1}^{(n-1)\alpha}u_{1}^{\alpha}+f_{2}^{(n-1)\alpha}u_{2}^{\alpha}+...
  +f_{n}^{(n-1)\alpha}u_{n}^{\alpha}&=g
\end{align}
Using Cramer's rule, solve the system of equations \eqref{rqaqqa} to find the solutions in the form:
\begin{equation}\label{zsaaqq}
  u_{m}^{\alpha}=\frac{g(x)W_{m}(x)}{W(x)},~~~m=1,2,...,n,
\end{equation}
Here, $W(x)=Wf_{1}, f_{2}, ..., f_{n}$ and $W_{m}$ is the determinant obtained from $W$ by replacing the $m^{th}$ column with the transpose of column $(0,0,...,0,1)$. A particular solution of Eq.\eqref{ijnyh} is then given by:
\begin{equation}\label{rdeeess}
  F(x)=\sum_{m=1}^{n}f_{m}(x)\int_{x_{0}}^{x}\frac{g(s)W_{m}(s)}{W(s)}d_{F}^{\alpha}s,
\end{equation}
where $x_{0}$ is arbitrary.
\begin{example}
Consider the $3^{rd}$ $\alpha$-order linear nonhomogeneous fractal differential equation:
\begin{equation}\label{dswww}
  f^{3\alpha}(x)- f^{2\alpha}(x)- f^{\alpha}(x)+f=g(x)
\end{equation}
To find the general solution of Eq.\eqref{dswww}, first, solve the $3^{rd}$ $\alpha$-order linear homogeneous fractal differential equation. Its general solution is:
\begin{equation}\label{ddddddd}
  f_{c}(x)=c_{1}\exp(S_{F}^{\alpha}(x))+
  c_{2}S_{F}^{\alpha}(x)\exp(S_{F}^{\alpha}(x))
  +c_{3}\exp(-S_{F}^{\alpha}(x)),
\end{equation}
where $\exp(S_{F}^{\alpha}(x)),~S_{F}^{\alpha}(x)
\exp(S_{F}^{\alpha}(x)),~\exp(-S_{F}^{\alpha}(x))$ are fundamental independent solutions of the homogeneous equation. To find a particular solution, use Eq.\eqref{rdeeess}. The result is:
\begin{align}\label{rszasdeeess}
  F(x)&=\exp(S_{F}^{\alpha}(x))\int_{x_{0}}^{x}\frac{g(t)(-1-2t)}
  {4\exp(S_{F}^{\alpha}(x))}d_{F}^{\alpha}t
  \nonumber\\&+S_{F}^{\alpha}(x)\exp(S_{F}^{\alpha}(x))\int_{x_{0}}^{x}\frac{g(t)(2)}
  {4\exp(S_{F}^{\alpha}(x))}d_{F}^{\alpha}t\nonumber\\&
  +\exp(-S_{F}^{\alpha}(x))\int_{x_{0}}^{x}\frac{g(s)\exp(2S_{F}^{\alpha}(t))}
  {4\exp(S_{F}^{\alpha}(x))}d_{F}^{\alpha}t
\end{align}
Further simplification yields:
\begin{align}\label{rszzaaqasdeeess}
  F(x)&=\frac{1}{4}
  \int_{x_{0}}^{x}(\exp(S_{F}^{\alpha}(x)-S_{F}^{\alpha}(t))
  (-1+2(S_{F}^{\alpha}(x)-S_{F}^{\alpha}(t)))\nonumber\\&+
  \exp(-(S_{F}^{\alpha}(x)-S_{F}^{\alpha}(t))))g(t)d_{F}^{\alpha}t
\end{align}
For the choice $g(t)=S_{F}^{\alpha}(t)^{-2}\exp(S_{F}^{\alpha}(t))$, the particular solution is:
\begin{equation}\label{oloii}
  F(x)=-S_{F}^{\alpha}(x)\exp(S_{F}^{\alpha}(x))\ln|S_{F}^{\alpha}(x)|
\end{equation}
\end{example}

\begin{example}
Consider the spring-mass system comprising two unit masses connected to springs with spring constants $k_{1}$ and $k_{2}$, as depicted in Figure \ref{wetry}.
\begin{figure}[H]
  \centering
  \begin{tikzpicture}

\def\springlength{4}  
\def\massone{1}
\def\masstwo{0.5}

\draw (2.2,0) -- (2.2,0.5) node[midway,below] {};

\draw (2.2,0.2) -- (2.7,0.2);

\draw[decorate,decoration={coil,amplitude=6pt,segment length=4pt}] (2.7,0.2) -- (2.7+\springlength,0.2);
\node[above] at (2.7+\springlength/2,0.5) {$k_1$};

\draw[fill=blue!30] (2.7+\springlength, 0) rectangle ++(0.5*\massone,0.5);
\node[above] at (2.95+\springlength,0.5) {$m_1$};

\draw[decorate,decoration={coil,amplitude=6pt,segment length=4pt}] (2.7+\springlength+0.5*\massone,0.2) -- (2.7+2*\springlength+0.5*\massone,0.2);
\node[above] at (2.7+1.5*\springlength+0.25*\massone,0.5) {$k_2$};

\draw[fill=red!40] (2.7+2*\springlength+0.5*\massone, 0) rectangle ++(0.5*\masstwo,0.5);
\node[above] at (2.95+2*\springlength+0.25*\masstwo,0.5) {$m_2$};
\end{tikzpicture}
  \caption{Graph of the spring-mass system.}\label{wetry}
\end{figure}
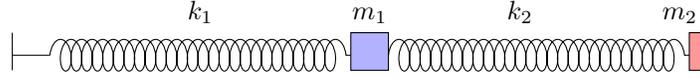
The equations of motion governing the behavior of two masses in fractal time are described as follows:
\begin{align}
  u_{1}^{2\alpha}+(k_{1}+k_{2})u_{1}&=k_{2}u_{2},~~~~\textmd{for}~~~\textmd{mass}~~~m_{1}\label{eerdesw}\\
  u_{2}^{2\alpha}+k_{2}u_{2}&=k_{2}u_{1},~~~~\textmd{for}~~\textmd{~mass}~~~m_{2}\label{eerdesw2}
\end{align}
Here, $u_{1}$ and $u_{2}$ represent the displacements of the masses from their respective equilibrium positions. Substituting $u_{2}$ from Eq.\eqref{eerdesw} into Eq.\eqref{eerdesw2} yields the following equation:
\begin{equation}\label{iokml}
  u_{1}^{4\alpha}+(k_{1}+2k_{2})u_{1}^{2\alpha}+k_{1}k_{2}u_{1}=0,
\end{equation}
The solution to Eq.\ref{iokml} is expressed as:
\begin{align}\label{iop415}
  u_{1c}(x)=c_{1}\cos(S_{F}^{\alpha}(x))+&
  c_{2}\sin(S_{F}^{\alpha}(x))\nonumber\\&+c_{3}\cos(\sqrt{6}S_{F}^{\alpha}(x))
  +c_{4}\sin(\sqrt{6}S_{F}^{\alpha}(x))
\end{align}
By choosing the initial conditions specified in Eq.\eqref{ttttttt1}, the solutions to the given initial value problem are:
\begin{equation}\label{ttttttt1}
 u_{1}(0)=0,~~~u_{1}^{\alpha}(0)=0,~~~u_{2}(0)=2,~~~u_{2}^{\alpha}(0)=0.
\end{equation}
Alternatively, by selecting the initial conditions in Eq.\eqref{ttttttt2}, the solutions to the differential equation described by Eq.\eqref{iokml} are:
\begin{equation}\label{ionhiopkiujy}
   u_{1}(x)=\cos(S_{F}^{\alpha}(x)),~~~~u_{2}(x)=2\cos(S_{F}^{\alpha}(x))
\end{equation}
In the case where the specified initial conditions are:
\begin{equation}\label{ttttttt2}
 u_{1}(0)=-2,~~~u_{1}^{\alpha}(0)=0,~~~u_{2}(0)=1,~~~u_{2}^{\alpha}(0)=0.
\end{equation}
The solutions to the differential equation described by Eq. \eqref{iokml}, along with the provided initial value problem, are:
\begin{align}\label{88ionhy}
   u_{1}(x)=-2\cos(\sqrt{6}S_{F}^{\alpha}(x)),~~~~
   u_{2}(x)=\cos(\sqrt{6}S_{F}^{\alpha}(x)).
\end{align}
\begin{figure}
  \centering
  \includegraphics[scale=0.5]{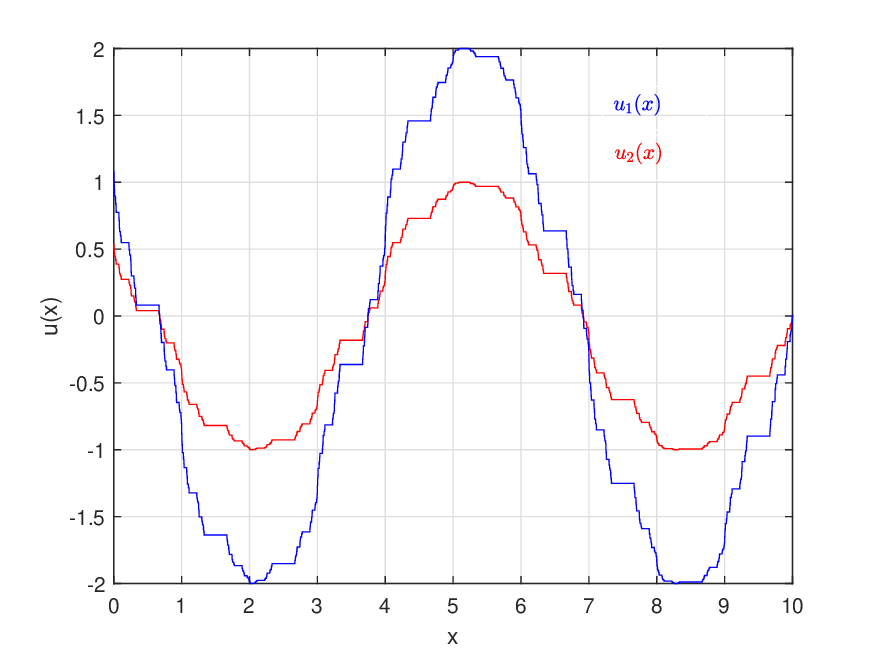}
  \caption{The graph of Eq.\eqref{ionhiopkiujy}.}\label{Eseesepplre}
\end{figure}
\begin{figure}
  \centering
  \includegraphics[scale=0.5]{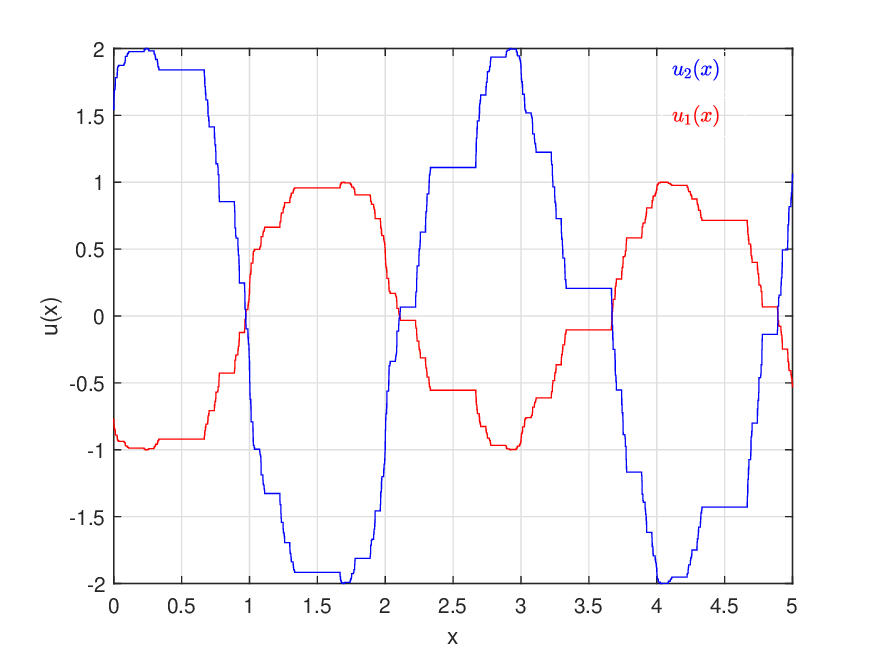}
  \caption{The graph of Eq.\eqref{88ionhy}.}\label{qweeEspplre}
\end{figure}
The graphs illustrating the solutions provided in equations \eqref{ionhiopkiujy} and \eqref{88ionhy} can be observed in Figures \ref{Eseesepplre} and \ref{qweeEspplre}.
\end{example}


\section{Conclusion \label{7g}}
This paper has delved into the realm of $n$th $\alpha$-order fractal homogeneous and nonhomogeneous differential equations, providing solutions for both. Specifically, we tackled higher or $n$th $\alpha$-order fractal homogeneous equations with constant coefficients, employing the Method of variation of parameters and the method of undetermined coefficients to find explicit solutions for higher nonhomogeneous fractal differential equations.
Furthermore, we applied these methods to explore the equations of motion governing a system of two masses in fractal time. The obtained solutions were graphically represented, offering insights into the dynamic behavior of the system.
In essence, this work contributes to the understanding and solution techniques for fractal differential equations, providing a foundation for further exploration and applications in diverse scientific domains.\\
\textbf{Declaration of Competing Interest:}\\
The authors declare that they have no known competing financial interests or personal relationships that could have appeared to influence the work reported in this paper.


\bibliographystyle{elsarticle-num}
\bibliography{Refrancesma10}

\end{document}